\newtheorem{theorem}{Theorem}[section]
\theoremstyle{definition}
\newtheorem{example}[theorem]{Example}
\theoremstyle{remark}
\numberwithin{equation}{section}
\begin{document}

\title[Sum of the Hurwitz-Lerch Zeta Function]{Sum of the Hurwitz-Lerch Zeta Function over Prime Numbers: Derivation and Evaluation}


\author{Robert Reynolds}
\address[Robert Reynolds]{Department of Mathematics and Statistics, York University, Toronto, ON, Canada, M3J1P3}
\email[Corresponding author]{milver@my.yorku.ca}
%
\author{ Allan Stauffer}
\address[Allan Stauffer]{Department of Mathematics and Statistics, York University, Toronto, ON, Canada, M3J1P3}
\email{stauffer@yorku.ca}
\thanks{This research is supported by NSERC Canada under Grant 504070}

\subjclass[2020]{Primary  30E20, 33-01, 33-03, 33-04}

\keywords{Hurwitz-Lerch zeta function, Catalan's constant, trigonometric function}

\date{}

\dedicatory{}

\begin{abstract}
For the function $f(m,p,q,n)$, where $k,s,a$ general complex numbers and $q$ any positive integer, we establish the sum of values of the Hurwitz-Lerch zeta function $\Phi(f(m,p,q,n),k,a)$ taken at prime numbers $n$. Special cases of this sum are evaluated in terms of products of trigonometric functions and Catalan's constant $K$.
\end{abstract}

\maketitle
\section{Introduction}
The sum of the Hurwitz-Lerch Zeta Function has been published in the work by Garunk\u{s}tis \cite{garunktis} where the sum of the Lerch Zeta-Function over nontrivial zeros of the Dirichlet $L$-Function was evaluated. In this present work we derive a new expression for the Hurwitz-Lerch zeta function in terms of the sum of two Hurwitz-Lerch zeta functions given by
\begin{dmath}
\sum_{p=0}^{n-1}\left(\log ^k(a)-i^k 2^{k+1} e^{\frac{2 i (m n+\pi  p q)}{n}} \Phi \left(-e^{2 i \left(m+\frac{p \pi 
   q}{n}\right)},-k,1-\frac{1}{2} i \log (a)\right)\right)\\\\
=   n \left(\log ^k(a)-2^{k+1} (i n)^k e^{2 i m n} \Phi
   \left(-e^{2 i m n},-k,1-\frac{i \log (a)}{2 n}\right)\right)
\end{dmath}
where the variables $k,a,m$ are general complex numbers, $q$ is any positive integer and $n$ is any prime number. The derivations follow the method used by us in \cite{reyn4}. This method involves using a form of the generalized Cauchy's integral formula given by
\begin{equation}\label{intro:cauchy}
\frac{y^k}{\Gamma(k+1)}=\frac{1}{2\pi i}\int_{C}\frac{e^{wy}}{w^{k+1}}dw,
\end{equation}
where $y,w\in\mathbb{C}$ and $C$ is in general an open contour in the complex plane where the bilinear concomitant \cite{reyn4} has the same value at the end points of the contour. This method involves using a form of equation (\ref{intro:cauchy}) then multiplies both sides by a function, then takes the definite integral of both sides. This yields a definite integral in terms of a contour integral. Then we multiply both sides of equation (\ref{intro:cauchy})  by another function and take the infinite sum of both sides such that the contour integral of both equations are the same.
\section{The Lerch function}

We use equation (1.11.3) in \cite{erd} where $\Phi(z,s,v)$ is the Lerch function which is a generalization of the Hurwitz zeta $\zeta(s,v)$ and Polylogarithm functions $Li_{n}(z)$. The Lerch function has a series representation given by

\begin{equation}\label{knuth:lerch}
\Phi(z,s,v)=\sum_{n=0}^{\infty}(v+n)^{-s}z^{n}
\end{equation}
where $|z|<1, v \neq 0,-1,-2,-3,..,$ and is continued analytically by its integral representation given by

\begin{equation}\label{knuth:lerch1}
\Phi(z,s,v)=\frac{1}{\Gamma(s)}\int_{0}^{\infty}\frac{t^{s-1}e^{-(v-1)t}}{e^{t}-z}dt
\end{equation}
where $Re(v)>0$, and either $|z| \leq 1, z \neq 1, Re(s)>0$, or $z=1, Re(s)>1$.
\section{Finite Sum of the Contour Integral}
We use the method in \cite{reyn4}. The cut and contour are in the first quadrant of the complex $w$-plane with $0 < Re(w+m) <1$.  The cut approaches the origin from the interior of the first quadrant and goes to infinity vertically and the contour goes round the origin with zero radius and is on opposite sides of the cut. Using a generalization of Cauchy's integral formula (\ref{intro:cauchy}) we first replace $y \to \log (a)+2 i (y+1)$ then multiply both sides by $-2 i (-1)^y e^{2 i (y+1) \left(m+\frac{\pi  p q}{n}\right)}$ and take the infinite sums over $y\in[0,\infty)$ and $p\in[0,n-1]$ and simplify in terms of the Hurwitz-Lerch zeta function to get
\begin{multline}\label{fsci}
\sum_{p=0}^{n-1}i \left(\log ^k(a)-i^k 2^{k+1} e^{\frac{2 i (m n+\pi  p q)}{n}} \Phi \left(-e^{2 i \left(m+\frac{p \pi 
   q}{n}\right)},-k,1-\frac{1}{2} i \log (a)\right)\right)\\\\
   =-\frac{1}{2\pi i}\sum_{y=0}^{\infty}\sum_{p=0}^{n-1}\int_{C}2 i (-1)^y a^w w^{-k-1} e^{\frac{2 i (y+1) (n (m+w)+\pi  p q)}{n}}\\\\
   =-\frac{1}{2\pi i}\int_{C}\sum_{p=0}^{n-1}\sum_{y=0}^{\infty}2 i (-1)^y a^w w^{-k-1} e^{\frac{2 i (y+1) (n (m+w)+\pi  p q)}{n}}\\\\
   =\frac{1}{2\pi i}\int_{C}\sum_{p=0}^{n-1}\left(a^w w^{-k-1} \tan \left(m+\frac{\pi  p q}{n}+w\right)-i a^w
   w^{-k-1}\right)dw\\\\
   =\frac{1}{2\pi i}\int_{C}\left(n a^w w^{-k-1} \tan (n (m+w))dw-i n a^w w^{-k-1}\right)dw
\end{multline}
from equation (4.4.7.1) in \cite{prud1} where $Im(n (m+w))>0, Re(n (m+w))>0, q\in\mathbb{Z_{+}}$, $n$ is a prime number and $n \neq q$ in order for the sums to converge. We apply Tonelli's theorem for multiple sums, see page 189 in \cite{gelca} as the summand is of bounded measure over the space $\mathbb{C} \times [0,n-1] \times [0,\infty) $.
\subsection{The Additional Contour Integral}
Using a generalization of Cauchy's integral formula (\ref{intro:cauchy}) we first replace $y \to \log (a)$ and multiply both sides by $-in$ simplify to get
\begin{equation}\label{addc1}
-\frac{i n \log ^k(a)}{\Gamma(k+1)}=-\frac{1}{2\pi i}\int_{C}i n a^w w^{-k-1}dw
\end{equation}
\section{Infinite Sum of the Contour Integral}
\subsection{Derivation of the contour integral}
We use the method in \cite{reyn4}. Using a generalization of Cauchy's integral formula (\ref{intro:cauchy}) we first replace $y \to \log (a)+2 i n (y+1))$ then multiply both sides by $2 i n e^{2 i (y+1) \left(m n+\frac{\pi }{2}\right)}$ and take the infinite sums over $y\in[0,\infty)$ and simplify in terms of the Hurwitz-Lerch zeta function to get
\begin{multline}\label{isci}
\frac{2^{k+1} (i n)^{k+1} e^{2 i \left(m n+\frac{\pi }{2}\right)} \Phi \left(e^{2 i \left(m n+\frac{\pi
   }{2}\right)},-k,1-\frac{i \log (a)}{2 n}\right)}{\Gamma(k+1)}\\\\
   =\frac{1}{2\pi i}\sum_{y=0}^{\infty}\int_{C}2 i n a^w
   w^{-k-1} e^{2 i (y+1) (b+n (m+w))}dw\\\\
   =\frac{1}{2\pi i}\int_{C}\sum_{y=0}^{\infty}2 i n a^w
   w^{-k-1} e^{2 i (y+1) (b+n (m+w))}dw\\\\
=\frac{1}{2\pi i}\int_{C}  \left( n a^w w^{-k-1} \tan (n (m+w))-i n a^w w^{-k-1}\right)dw
\end{multline}
from equation (1,232,1) in \cite{grad} where $Im(n (m+w))>0$ in order for the sum to converge.
\subsection{The Additional Contour Integral}
Using a generalization of Cauchy's integral formula (\ref{intro:cauchy}) we first replace $y \to \log (a)$ and multiply both sides by $-in$ simplify to get
\begin{equation}\label{addc2}
-\frac{i n \log ^k(a)}{\Gamma(k+1)}=-\frac{1}{2\pi i}\int_{C}i n a^w w^{-k-1}dw
\end{equation}
\section{Sum of the Hurwitz-Lerch zeta Function}
\begin{theorem}
For all $k,a,m\in\mathbb{C}, q\in\mathbb{Z_{+}}$, $n$ any prime number and $n \neq q$ then  
\begin{multline}\label{fslf}
\sum_{p=0}^{n-1} \left(\log ^k(a)-i^k 2^{k+1} e^{\frac{2 i (m n+\pi  p q)}{n}} \Phi \left(-e^{2 i \left(m+\frac{p \pi 
   q}{n}\right)},-k,1-\frac{1}{2} i \log (a)\right)\right)\\\\
=   n \left(\log ^k(a)-2^{k+1} (i n)^k e^{2 i m n} \Phi
   \left(-e^{2 i m n},-k,1-\frac{i \log (a)}{2 n}\right)\right)
\end{multline}
\end{theorem}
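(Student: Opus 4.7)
The plan is to exploit the coincidence that the finite-sum derivation (3.1) and the infinite-sum derivation (4.1) produce \emph{the same} contour integral on their right-hand sides, so that equating the two different-looking left-hand sides immediately yields the theorem.

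First, I would verify that the tangent contour integrals on the right of (3.1) and (4.1) agree. In (3.1), after the Tonelli interchange already invoked in the excerpt, the inner finite sum over $p$ is collapsed via the specialisation of Prudnikov 4.4.7.1,
\[
\sum_{p=0}^{n-1}\tan\!\Bigl(m+\tfrac{\pi p q}{n}+w\Bigr)=n\tan\bigl(n(m+w)\bigr),
\]
which requires $n$ prime and $n\neq q$ so that $pq$ runs over all residues modulo $n$. In (4.1), the identical expression emerges from the geometric sum of $e^{2i(y+1)(n(m+w)+\pi/2)}$ via Gradshteyn 1.232.1. Both right-hand sides therefore equal
\[
\tfrac{1}{2\pi i}\int_{C}\bigl(n a^{w}w^{-k-1}\tan(n(m+w))-in a^{w}w^{-k-1}\bigr)\,dw.
\]

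Second, I would strip off the contribution of the $-ina^{w}w^{-k-1}$ piece, which by the additional contour integrals (3.2) and (4.2) evaluates to $-in\log^{k}(a)/\Gamma(k+1)$ on each side and therefore cancels. What remains is an equality between the Hurwitz--Lerch expression on the left of (3.1) and the Hurwitz--Lerch expression on the left of (4.1).

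Third, I would recast the Lerch term inherited from (4.1) using the elementary identities $e^{2i(mn+\pi/2)}=-e^{2imn}$ and $(in)^{k+1}=in\cdot(in)^{k}$, which turn it into $-n\cdot 2^{k+1}(in)^{k}e^{2imn}\Phi\!\bigl(-e^{2imn},-k,1-\tfrac{i\log(a)}{2n}\bigr)$, so that the Lerch arguments on the two sides are already in the form displayed in (5.1); combining with the $n\log^{k}(a)$ pieces extracted from the constant terms on both sides produces exactly the claimed identity. The main obstacle is bookkeeping: aligning the powers of $i$, the factors $2^{k+1}$, and the $\Gamma(k+1)$ from the Cauchy kernel so that the two reformulations of the shared contour integral match term-for-term before the cancellation is performed. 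Once the prefactors and the $e^{i\pi}=-1$ simplification are in hand, no further analytic content is required and the theorem follows.
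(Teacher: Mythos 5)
Your proposal is correct and follows essentially the same route as the paper: both arguments rest on observing that the finite-sum derivation (3.1) together with its additional contour integral (3.2) and the infinite-sum derivation (4.1) together with (4.2) share the identical right-hand side $\frac{1}{2\pi i}\int_{C}\bigl(na^{w}w^{-k-1}\tan(n(m+w))-ina^{w}w^{-k-1}\bigr)dw$, so the left-hand sides may be equated and simplified via $e^{i\pi}=-1$, $(in)^{k+1}=in\,(in)^{k}$, and cancellation of $\Gamma(k+1)$. Your extra care in spelling out why $n$ prime with $n\neq q$ makes the Prudnikov tangent-sum identity applicable is a welcome addition, but it does not change the argument.
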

\begin{proof}
Observe that the addition of the right-hand sides of equations (\ref{fsci}) and (\ref{addc1}), is equal to the addition of the right-hand sides of equations (\ref{isci}) and (\ref{addc2}) so we may equate the left-hand sides and simplify the Gamma function to yield the stated result.
\end{proof}
\begin{example}
The degenerate case.
\begin{equation}
\sum_{p=0}^{n-1}\tan \left(m+\frac{\pi  p q}{n}\right)=n \tan (m n)
\end{equation}
\end{example}
\begin{proof}
Use equation (\ref{fslf}) and set $k=0$ and simplify using entry (4) in table below (64:12:7) in \cite{atlas}.
\end{proof}
\begin{example}
\begin{multline}
\prod_{p=0}^{n-1}\cos ^3\left(\frac{\pi  p q}{n}+\frac{x}{2}\right) \sec ^2\left(\frac{\pi  p q}{n}+\frac{x}{4}\right) \sec
   \left(\frac{\pi  p q}{n}+x\right)\\\\
   =\cos ^3\left(\frac{n x}{2}\right) \sec ^2\left(\frac{n x}{4}\right) \sec (n
   x)
\end{multline}
\end{example}
\begin{proof}
Use equation (\ref{fslf}) and set $k=1,a=-1,m=x$ and apply the method in section (8) in \cite{reyn_ejpam}.
\end{proof}
\begin{example}
\begin{multline}
\prod_{p=0}^{n-1}e^{4 \tan \left(\frac{\pi  p q}{n}+x\right)-4 \tan \left(\frac{\pi  p q}{n}+\frac{x}{2}\right)} \left(\cos
   \left(\frac{\pi  p q}{n}+\frac{x}{2}\right) \sec \left(\frac{\pi  p q}{n}+x\right)\right)^{2 i \pi }\\\\
   =\left(\cos
   \left(\frac{n x}{2}\right) \sec (n x)\right)^{2 i \pi } e^{4 n \tan \left(\frac{n x}{2}\right) \sec (n x)}
\end{multline}
\end{example}
\begin{proof}
Use equation (\ref{fslf}) and set $k=1,a=i,m=x$ and apply the method in section (8) in \cite{reyn_ejpam}.
\end{proof}
\begin{example}
\begin{equation}
\prod_{p=0}^{n-1}\cos \left(m+\frac{\pi  p q}{n}\right) \sec \left(\frac{\pi  p q}{n}+r\right)=e^{i (n-1) (m-r)} \cos (m n) \sec
   (n r)
\end{equation}
\end{example}
\begin{proof}
Use equation (\ref{fslf}) and form a second equation by replacing $m$ by $r$ and take their difference and set $k=-1,a=1$ and simplify using entry (3) in Table below (64:12:7) in \cite{atlas}.
\end{proof}
\begin{example}
Catalan's constant $K$.
\begin{equation}
\sum_{p=0}^{n-1} n e^{\frac{2 i (\pi  p q+\pi )}{n}} \Phi \left(-e^{2 i \left(\frac{p \pi  q}{n}+\frac{\pi
   }{n}\right)},2,1-\frac{n}{2}\right)=4K
\end{equation}
\end{example}
\begin{proof}
Use equation (\ref{fslf}) and set $a=-n,m=\pi/n$ and simplify using equation(25.14.3) in \cite{dlmf} and equation (2.2.1.2.7) in \cite{lewin}.
\end{proof}
\begin{example}
Recurrence identity with consecutive neighbours 
\begin{multline}
\Phi (z,s,a)=-e^{\frac{2 i \pi  q}{3}} \Phi \left(e^{\frac{2 i \pi  q}{3}} z,s,a\right)-e^{\frac{4 i \pi  q}{3}}
   \Phi \left(e^{\frac{4 i \pi  q}{3}} z,s,a\right)+3^{1-s} z^2 \Phi \left(z^3,s,\frac{a+2}{3}\right)
\end{multline}
\end{example}
\begin{proof}
Use equation (\ref{fslf}) and set $n=3,m=\log(z)/(2i),k=-s,a=e^{2i(a-1)}$ and simplify. Where $z,s,a\in\mathbb{C},q\neq3n, n\in\mathbb{Z}$
\end{proof}
\section{Conclusion}
In this paper, we have presented a novel method for deriving a new finite sum of the Hurwitz-Lerch zeta function along with some interesting products of trigonometric functions and a new finite sum for Catalan's constant $K$, using contour integration. The results presented were numerically verified for both real and imaginary and complex values of the parameters in the integrals using Mathematica by Wolfram.
\end{document}